\newtheorem{theorem}{Theorem}
\theoremstyle{plain}
\newtheorem{corollary}{Corollary}
\newtheorem{lemma}{Lemma}
\newtheorem{remark}{Remark}
\numberwithin{equation}{section}
\title[Observability and controllability of the 1--d wave equation]{%
Observability and controllability of the 1--d wave equation in domains with
moving boundary}
\author{Abdelmouhcene SENGOUGA}
\address[Abdelmouhcene Sengouga]{ Laboratory of Functional Analysis and
Geometry of Spaces, University of M'sila, 28000 M'sila, Algeria.}
\email{amsengouga@gmail.com}
\subjclass[2010]{35L05, 93B05}
\keywords{Wave equation, non-cylindrical domains, observability,
controllability, Hilbert uniqueness method, generalized Fourier series.}
\date{\today.}
\begin{document}

\begin{abstract}
By mean of generalized Fourier series and Parseval's equality in weighted $%
L^{2}$--spaces, we derive a sharp energy estimate for the wave equation in a
bounded interval with a moving endpoint. Then, we show the observability, in
a sharp time, at each of the endpoints of the interval. The observability
constants are explicitly given. Using the Hilbert Uniqueness Method we
deduce the exact boundary controllability of the wave equation.

\end{abstract}
\maketitle
\section{Introduction}

In this work, we will consider transverse oscillations of a uniform string
whose length varies linearly with time. For $t\geq t_{0}>0$, we first denote%
\begin{equation*}
\Omega _{t}:=\left( 0,\ell t\right) ,
\end{equation*}%
which is an interval with the right endpoint depending on time. We assume
that 
\begin{equation}
0<\ell <1,  \label{tlike}
\end{equation}%
i.e. the length of $\Omega _{t}$ is increasing with a constant speed $\ell $
less then $1$. The case of a fixed interval, i.e. $\ell =0,$ and intervals
with a fast moving endpoint, $\ell \geq 1$, will not be considered here. We
prefer to follow Balazs \cite{Bal:61} and take the initial time $t_{0}>0.$ We can
take the initial time to be $t_{0}=0$ and an initial domain $\left( 0,\ell
_{0}\right) $, but this will complicate the mathematics and the Fourier
formulas obtained below.

Let $T>t_{0}\ $and consider the\ following non-cylindrical domain, and its
lateral boundary,%
\begin{equation*}
Q_{T}:=\bigcup_{t_{0}<s<T}\left\{ \Omega _{s}\times \left\{ s\right\}
\right\} ,\ \ \ \Sigma _{T}:=\bigcup_{t_{0}<s<T}\left\{ \partial \Omega
_{s}\times \left\{ s\right\} \right\} .
\end{equation*}%
Off course (\ref{tlike}) ensure the so-called time-likeness condition $%
\left\vert \nu _{t}\right\vert \leq \left\vert \nu _{x}\right\vert $ on$\
\Sigma _{t},\ $for $t>0,$ where $\nu =(\nu _{t},\nu _{x})$ is the unit
outward normal on the lateral boundary $\Sigma _{t}$. Let us now consider
the following wave equation with homogeneous Dirichlet boundary conditions 
\begin{equation}
\left\{ 
\begin{array}{ll}
\phi _{tt}-\phi _{xx}=0,\ \medskip & \text{in }Q_{T}, \\ 
\phi \left( 0,t\right) =0,\text{ \ \ }\phi \left( \ell t,t\right) =0, \medskip & \ 
\text{for \ }t\in \left( t_{0},T\right) , \\ 
\phi (x,t_{0})=\phi ^{0}\left( x\right) \text{, \ \ }\phi _{t}\left(
x,t_{0}\right) =\phi ^{1}\left( x\right) , & \ \text{for \ \ }x\in \Omega
_{t_{0}},%
\end{array}%
\right.  \label{wave}
\end{equation}%
where $\phi (x,t)$ is the transverse displacement of the string and the
subscripts $t$ and $x$ stand for the derivatives with respect to time and
space, respectively. This is an example of evolution problems in
non-cylindrical domains arising in many important applications (such as biology,
engineering, quantum mechanics,...), see the survey paper by Knobloch and Krechetnikov \cite{KnK:14}.

Under the assumption (\ref{tlike}), it is by now well known that for every
initial data%
\begin{equation*}
\phi ^{0}\in H_{0}^{1}\left( \Omega _{t_{0}}\right) ,\phi ^{1}\in
L^{2}\left( \Omega _{t_{0}}\right) \ 
\end{equation*}%
there exists a unique solution to Problem (\ref{wave}) such that%
\begin{equation*}
\phi \in C\left( [t_{0},T];H_{0}^{1}\left( \Omega _{t}\right) \right) ,\text{
\ }\phi _{t}\in C\left( [t_{0},T];L^{2}\left( \Omega _{t}\right) \right) ,
\end{equation*}%
see \cite{BaC:81,Lion:69}. We define the "energy" of the above problem as 
\begin{equation*}
E\left( t\right) =\frac{1}{2}\int_{0}^{\ell t}\phi _{x}^{2}+\phi _{t}^{2}%
\text{ }dx\ \ \ \ \ \text{for }t\geq t_{0}.
\end{equation*}%
which is not a conserved quantity in time, in contrast with the wave equation
in cylindrical domains.

Let $\xi \in \left\{ 0,\ell t\right\} $ and denote $T_{0}:=T-t_{0}.$ The
problem of observability of (\ref{wave}) at the boundary $x=\xi $ can be
formulated as follows: To give sufficient conditions on $T_{0}$ such that
there exists $C(T_{0})>0$ for which the following inequality holds for all
solutions of (\ref{wave}): 
\begin{equation}
E\left( t_{0}\right) \leq C(T_{0})\int_{t_{0}}^{t_{0}+T_{0}}\phi
_{x}^{2}\left( \xi ,t\right) +\phi _{t}^{2}\left( \xi ,t\right) dt.
\label{ct}
\end{equation}%
This is the so-called observability inequality, which allows estimating the
energy of solutions in terms of the energy localized at the boundary $x=\xi $%
. The best value of $C(T_{0})$ is the observability constant. Due to the finite speed of propagation (here
equal to $1$), the time $T_{0}$ should be sufficiently large and one expects
that it depends on the initial length of $\Omega _{t_{0}}$ and\ also on the
speed of expansion $\ell .$

On the other hand, we consider the following boundary controllability
problem: Given 
\begin{equation*}
(y^{0},y^{1})\in L^{2}\left( \Omega _{t_{0}}\right) \times H^{-1}\left(
\Omega _{t_{0}}\right) \ \text{\ and \ }(y_{T}^{0},y_{T}^{1})\in L^{2}\left(
\Omega _{t}\right) \times H^{-1}\left( \Omega _{t}\right) ,
\end{equation*}%
find a control function $v\in L^{2}\left( t_{0},T\right) ,$ acting at the
boundary $x=\xi ,$ such that the solution of%
\begin{equation}
\left\{ 
\begin{array}{ll}
y_{tt}-y_{xx}=0,\ \medskip  & \text{in }Q_{T}, \\ 
\displaystyle y\left( 0,t\right) =\left( 1-\frac{\xi }{\ell t}\right) v\left( t\right) ,%
\text{ \ \ }y\left( \ell t,t\right) =\frac{\xi }{\ell t}v\left( t\right) , \medskip & 
\ \text{for \ }t\in \left( t_{0},T\right) , \\ 
y(x,t_{0})=y^{0}\left( x\right) ,\text{ \ \ }y_{t}\left( x,t_{0}\right)
=y^{1}\left( x\right) , & \ \text{for \ \ }x\in \Omega _{t_{0}},%
\end{array}%
\right.   \label{wavec}
\end{equation}%
satisfies 
\begin{equation}
y\left( T\right) =y_{T}^{0},\quad y_{t}\left( T\right) =y_{T}^{1}.
\label{yT}
\end{equation}%
Problem (\ref{wavec}) admits a unique solution in the sense of
transposition, see \cite{Mir:96,MML:13},%
\begin{equation*}
y\in C([t_{0},T];L^{2}\left( \Omega _{t}\right) )\cap
C^{1}([t_{0},T];H^{-1}\left( \Omega _{t}\right) ).
\end{equation*}%
To show the controllability, we use the \emph{Hilbert Uniqueness Method}
(HUM) introduced in the seminal work of Lions \cite{Lion:88}, see also %
Komornik \cite{Komo:94}. The method reduces the controllability problem to the
observability of the homogeneous problem (\ref{wave}).

The controllability of the wave equation in non-cylindrical domains was
considered by several authors. Using the multiplier method, Bardos and Chen \cite{BaC:81}
derived some decay estimates for the wave equation in time-like domains,
then show the exact internal controllability of the wave equation by a
stabilization approach. Using a suitable change of variables, Miranda \cite{Mir:96}
transforms the non-cylindrical domain to a cylindrical one, obtaining a new
operator with variable coefficients, then he shows the exact boundary
controllability by HUM. Recently, there has been a renewed interest in such controllability problems. In particular, Problem (\ref{wavec}) was considered, for instance,
by \cite{CJW:15,CLG:13,SLL:15} where the controllability is established by the
multiplier method. Although this is a one-dimensional problem,  no one so far, to my knowledge, gave
the minimal time of controllability or observability and specified the constant of
observability.

Going back to problems in cylindrical domains, the first results of
observability and controllability of evolution problems was obtained by
Fourier series, see Komornik and Loreti \cite{KoLo:05}, Russell \cite{Rus:78} and the references cited therein.
This is not the case for non-cylindrical domains. To the author's knowledge,
this paper is the first attempt to apply Fourier series techniques to
establish observability and controllability results for the wave equation in
non-cylindrical domains. Further results will be presented elsewhere. 

In this work, we first express the solution of Problem (\ref{wave}) by a Fourier formula,
then an estimate of the energy is derived by using Parseval's equality in a
weighted $L^{2}$ space. In agreement with precedent works, for instance \cite%
{SLL:15}, the energy decays as $t^{-1}.$ Next, we show that Problem (\ref%
{wave}) is observable at the fixed endpoint of the interval as well as at
the moving one. The observability constants are explicitly given and
obviously depend on $\ell .$ The sharp time of observability, $T_{0}=2\ell
t_{0}/\left( 1-\ell \right) ,$ turn out to be the same for both endpoints.
Using HUM we obtain the exact boundary controllability, at one of the
endpoints, of Problem (\ref{wavec}) for $T_{0}\geq 2\ell t_{0}/\left( 1-\ell
\right) $. This improves some recent results on the controllability of (\ref%
{wavec}) obtained by the multiplier method. In particular, taking the
initial length $\ell t_{0}=1$ and assuming that (\ref{tlike}) holds,

$\bullet $ Cui et al. \cite{CJW:15} showed the controllability of (\ref{wavec}), at
the fixed endpoint, for a larger time $T_{1}>\left( \exp \left( \frac{2\ell
\left( 1+\ell \right) }{\left( 1-\ell \right) ^{3}}\right) -1\right) /\ell $.

$\bullet $ Sun et al. \cite{SLL:15} showed the controllability of (\ref{wavec}), at
the moving endpoint, for $T_{2}>2/\left( 1-\ell \right) $. However, they did
not address the limiting case $T_{2}=2/\left( 1-\ell \right) $ and its
optimality. Cui et al. \cite{CLG:13}\emph{\ }obtained a larger time of
controllability\linebreak $T_{3}>\left( \exp \left( \frac{2\ell \left(
1+\ell \right) }{\left( 1-\ell \right) }\right) -1\right) /\ell .$

The remainder of this paper is organized as follows. In section 2, we give
the exact solution of Problem (\ref{wave}) then derive a sharp estimate for
the energy. Next, the boundary observability and controllability at the
fixed endpoint and at the moving endpoint are shown in the third and fourth
section, respectively.

\section{Energy estimates}

\subsection{Exact solution}

Let $a,b\in 
\mathbb{R}
,$ such that $a>b$ and consider a nonnegative (weight) function \linebreak $w:\left(
a,b\right) \rightarrow 
\mathbb{R}
$. In the sequel, we denote by $L^{2}\left( a,b,wdx\right) $ the weighted
Hilbert space of measurable complex valued functions on $%
\mathbb{R}
,$ endowed by the scalar product%
\begin{equation*}
\int\limits_{a}^{b}f\left( x\right) \overline{g\left( x\right) }\text{ }%
w\left( x\right) dx,
\end{equation*}%
and its associated norm, see for instance Asmar \cite{Asma:05}. As usual, we drop 
$wdx$ in the space notation if $w\equiv 1$.

A well known result in analysis is that the set of functions $\left\{ \left(
1/\sqrt{2}\right) e^{in\pi z}\right\} _{n\in 
\mathbb{Z}
}$ is a complete orthonormal set in the space $L^{2}\left( 0,2\right) .$ By
making the change of variable 
\begin{equation*}
z=\alpha _{\ell }\log \left( \frac{t+x}{t(1-\ell)}\right) ,\text{ \ \ for every\ }t\geq t_{0},
\end{equation*}%
where $\alpha _{\ell }=2/\log \left( \frac{1+\ell }{1-\ell }\right)$, we
obtain the set of function $\left\{ \sqrt{\alpha _{\ell }/2}\ e^{in\pi
\alpha _{\ell }\log \left( t+x\right) }\right\} _{n\in 
\mathbb{Z}
}$ which is still a complete orthonormal set in the weighted Hilbert space
\linebreak $L^{2}\left( -\ell t,\ell t,\frac{dx}{\left( t+x\right) }\right) $%
. Note that (\ref{tlike}) ensures that the weight function $1/\left(
t+x\right) $ is positive. By consequence, every $f(\cdot ,t)\in L^{2}\left(
-\ell t,\ell t,\frac{dx}{\left( t+x\right) }\right) $ can be written as 
\begin{equation*}
f(x,t)=\sum\limits_{-\infty }^{+\infty }c_{n}e^{in\pi \alpha _{\ell }\log
\left( t+x\right) },\ \ \ \forall t\geq t_{0},
\end{equation*}%
where the coefficients $c_{n}$ are given by 
\begin{equation*}
c_{n}=\frac{\alpha _{\ell }}{2}\int\limits_{-\ell t}^{\ell t}f(x,t)e^{-in\pi
\alpha _{\ell }\log \left( t+x\right) }\frac{dx}{\left( t+x\right) },\ \ \
\forall t\geq t_{0}.
\end{equation*}%
Moreover, the following Parseval's equality holds%
\begin{equation*}
\int\limits_{-\ell t}^{\ell t}\left\vert f(x,t)\right\vert ^{2}\text{ }\frac{%
dx}{\left( t+x\right) }=\left( \frac{2}{\alpha _{\ell }}\right)
\sum\limits_{-\infty }^{+\infty }\left\vert c_{n}\right\vert ^{2},\ \ \
\forall t\geq t_{0}.
\end{equation*}

Arguing as in \cite{Bal:61}, we can show that the exact solution of Problem (%
\ref{wave}), is the restriction to the interval $\left( 0,\ell t\right) $ of
the following function$\ $given by the generalized Fourier formulas%
\begin{equation*}
\phi (x,t)=\sum\limits_{-\infty }^{+\infty }C_{n}\left( e^{in\pi \alpha
_{\ell }\log \left( t+x\right) }-e^{in\pi \alpha _{\ell }\log \left(
t-x\right) }\right) ,\text{ \ \ }x\in \left( -\ell t,\ell t\right) ,t\geq
t_{0}.
\end{equation*}%
Note that $\phi $ is an odd function of $x$ which ensure that the condition $%
\phi \left( 0,t\right) =0$ is satisfied for every $t\geq t_{0}.$ The
coefficients $C_{n}$ are\ complex numbers, independent of $t$, given by 
\begin{equation*}
C_{0}=0\text{ \ \ and \ \ }C_{n}=\frac{1}{4n\pi i}\int\limits_{-\ell
t_{0}}^{\ell t_{0}}\left( \phi _{x}^{0}+\phi ^{1}\right) e^{-in\pi \alpha
_{\ell }\log \left( t_{0}+x\right) }dx\text{, \ \ if\ }n\neq 0,
\end{equation*}%
where the initial conditions $\phi ^{0}$ and $\phi ^{1}$ are also considered
as odd functions on $x$ defined on the interval $\left( -\ell t_{0},\ell
t_{0}\right) $.

\subsection{Energy estimates}

The following lemma gives the decay rate of the energy.

\begin{lemma}
\label{th1}Under the assumption \emph{(\ref{tlike}),} the solution of
Problem \emph{(\ref{wave})} satisfies 
\begin{equation}
tE\left( t\right) +\int\limits_{0}^{\ell t}x\phi _{x}\phi _{t}\ dx=S_{\ell },%
\text{ \ \ for \ }t\geq t_{0},  \label{est0}
\end{equation}%
where $S_{\ell }:=2\pi ^{2}\alpha _{\ell }\sum\limits_{-\infty }^{+\infty
}\left\vert nC_{n}\right\vert ^{2}$ is independent of $t,$ and it holds that%
\begin{equation}
\frac{S_{\ell }}{\left( 1+\ell \right) t}\leq E\left( t\right) \leq \frac{%
S_{\ell }}{\left( 1-\ell \right) t},\ \ \ \ \ \text{\ for }t\geq t_{0}.
\label{ES}
\end{equation}
\end{lemma}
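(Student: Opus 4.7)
My plan is to work directly with the generalized Fourier series for $\phi$ stated in the exact--solution paragraph, and to take advantage of the fact that along the characteristic directions the series has a very clean structure. Differentiating term by term, I would obtain
\begin{equation*}
\phi_x+\phi_t \;=\; 2\pi i\alpha_\ell\sum_{n} nC_n\,\frac{e^{in\pi\alpha_\ell\log(t+x)}}{t+x},
\qquad
\phi_x-\phi_t \;=\; -2\pi i\alpha_\ell\sum_{n} nC_n\,\frac{e^{in\pi\alpha_\ell\log(t-x)}}{t-x},
\end{equation*}
so that $(t+x)(\phi_x+\phi_t)$ is (up to the constant $2\pi i\alpha_\ell$) exactly a function of the form $\sum d_n e^{in\pi\alpha_\ell\log(t+x)}$ with $d_n=2\pi i\alpha_\ell\, nC_n$, placing it squarely in the setting of the weighted Parseval identity recalled just above the lemma.

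Next I would rewrite the combination appearing in \eqref{est0} in characteristic form. A direct algebraic identity gives
\begin{equation*}
t(\phi_x^2+\phi_t^2)+2x\phi_x\phi_t \;=\; \tfrac{1}{2}(t+x)(\phi_x+\phi_t)^2+\tfrac{1}{2}(t-x)(\phi_x-\phi_t)^2,
\end{equation*}
so $2tE(t)+2\int_0^{\ell t} x\phi_x\phi_t\,dx$ equals $\tfrac{1}{2}\int_0^{\ell t}\!\bigl[(t+x)(\phi_x+\phi_t)^2+(t-x)(\phi_x-\phi_t)^2\bigr]dx$. Because $\phi$ is odd in $x$, $\phi_x$ is even and $\phi_t$ is odd in $x$, which (together with the change $x\mapsto -x$) shows that the integrand above is even in $x$, so the integral over $(0,\ell t)$ equals half the integral over $(-\ell t,\ell t)$; moreover the same change of variable converts the $(t-x)(\phi_x-\phi_t)^2$ piece into the $(t+x)(\phi_x+\phi_t)^2$ piece, so both contribute equally.

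At this point I would apply the weighted Parseval identity to $f=(t+x)(\phi_x+\phi_t)$, writing
\begin{equation*}
\int_{-\ell t}^{\ell t}(t+x)(\phi_x+\phi_t)^2\,dx \;=\; \int_{-\ell t}^{\ell t}|f|^2\,\frac{dx}{t+x} \;=\; \frac{2}{\alpha_\ell}\sum_{n}|2\pi i\alpha_\ell\,nC_n|^2 \;=\; 8\pi^2\alpha_\ell\sum_n |nC_n|^2 \;=\; 4S_\ell.
\end{equation*}
Combining this with the reduction of the previous paragraph collapses everything to the single equality $tE(t)+\int_0^{\ell t} x\phi_x\phi_t\,dx=S_\ell$, which is \eqref{est0}; in particular the right-hand side is manifestly independent of $t$.

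For the two-sided estimate \eqref{ES}, I would bound the cross term by the elementary inequality $2|x\phi_x\phi_t|\le |x|(\phi_x^2+\phi_t^2)$; since $0\le x\le \ell t$ on the integration interval, this yields $\bigl|\int_0^{\ell t} x\phi_x\phi_t\,dx\bigr|\le \ell t\,E(t)$, and plugging into \eqref{est0} gives $(1-\ell)tE(t)\le S_\ell\le (1+\ell)tE(t)$, which is exactly \eqref{ES}. The only real obstacle I anticipate is the bookkeeping for the complex Fourier series (verifying that term-by-term differentiation is legitimate, that the parity identifications for $\phi_x$ and $\phi_t$ hold under the odd extension, and that $(t+x)(\phi_x+\phi_t)$ is indeed the correct orthonormal expansion to feed into Parseval); once those checks are in place, the proof is essentially a single algebraic identity.
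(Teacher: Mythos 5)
Your proposal is correct and follows essentially the same route as the paper: term-by-term differentiation of the Fourier formula, the characteristic combinations $(t\pm x)(\phi_x\pm\phi_t)$ fed into the weighted Parseval identity, reduction to $(0,\ell t)$ by parity, and the elementary bound on the cross term. The only cosmetic difference is that you state the pointwise algebraic identity before integrating, whereas the paper integrates the two characteristic pieces separately and then sums them.
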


\begin{proof}
First, we deduce from the exact formula of the solution that 
\begin{eqnarray}
\phi _{t}(x,t) &=&\sum\limits_{-\infty }^{+\infty }in\pi \alpha _{\ell
}C_{n}\left( \dfrac{e^{in\pi \alpha _{\ell }\log \left( t+x\right) }}{(t+x)}-%
\dfrac{e^{in\pi \alpha _{\ell }\log \left( t-x\right) }}{(t-x)}\right) , \smallskip
\label{pht} \\
\phi _{x}(x,t) &=&\sum\limits_{-\infty }^{+\infty }in\pi \alpha _{\ell
}C_{n}\left( \dfrac{e^{in\pi \alpha _{\ell }\log \left( t+x\right) }}{(t+x)}+%
\dfrac{e^{in\pi \alpha _{\ell }\log \left( t-x\right) }}{(t-x)}\right) ,
\label{phx}
\end{eqnarray}%
which are respectively an odd and an even functions of $x,$ for every $t\geq
t_{0}$. In particular we deduce that%
\begin{equation*}
(t+x)\left( \phi _{x}+\phi _{t}\right) =2\pi \alpha _{\ell
}\sum\limits_{-\infty }^{+\infty }inC_{n}e^{in\pi \alpha _{\ell }\log \left(
t+x\right) }.
\end{equation*}%
Thanks to the Parseval's equality, applied to $(t+x)\left( \phi _{x}+\phi
_{t}\right) $ as a function in the Hilbert space $L^{2}\left( -\ell t,\ell t,%
\frac{dx}{\left( t+x\right) }\right) $, we have%
\begin{equation}
\int\limits_{-\ell t}^{\ell t}(t+x)\left( \phi _{x}+\phi _{t}\right)
^{2}dx=\int\limits_{-\ell t}^{\ell t}\left\vert (t+x)\left( \phi _{x}+\phi
_{t}\right) \right\vert ^{2}\frac{dx}{\left( t+x\right) }=\left( \frac{2}{%
\alpha _{\ell }}\right) 4\pi ^{2}\alpha _{\ell }^{2}\sum\limits_{-\infty
}^{+\infty }\left\vert nC_{n}\right\vert ^{2}  \notag
\end{equation}%
i.e. 
\begin{equation}
\int\limits_{-\ell t}^{\ell t}(t+x)\left( \phi _{x}+\phi _{t}\right)
^{2}dx=4S_{\ell }\text{ \ \ for }t\geq t_{0}.  \label{E2}
\end{equation}%
Both sides are finite since $\phi _{x},\phi _{t}\in L^{2}\left( -\ell t,\ell t\right)$. Changing $x$ by $-x$ in the last formula, we also obtain 
\begin{equation}
\int\limits_{-\ell t}^{\ell t}(t-x)\left( \phi _{x}-\phi _{t}\right)
^{2}dx=4S_{\ell },\text{ \ \ for \ }t\geq t_{0}.  \label{E3}
\end{equation}%
Summing up (\ref{E2}) and (\ref{E3}), we infer that%
\begin{equation*}
2t\int\limits_{-\ell t}^{\ell t}\phi _{x}^{2}+\phi _{t}^{2}\
dx+4\int\limits_{-\ell t}^{\ell t}x\phi _{x}\phi _{t}\ dx=8S_{\ell },\text{
\ \ for \ }t\geq t_{0}.
\end{equation*}%
Since all the functions under the integral signs are even of $x,$ then 
\begin{equation*}
\frac{t}{2}\int\limits_{0}^{\ell t}\phi _{x}^{2}+\phi _{t}^{2}\
dx+\int\limits_{0}^{\ell t}x\phi _{x}\phi _{t}\ dx=S_{\ell },\text{ \ \ for
\ }t\geq t_{0},
\end{equation*}%
and (\ref{est0}) follows. To show (\ref{ES}), we use the inequality $\pm
ab\leq a^{2}+b^{2}$ to obtain%
\begin{equation*}
\pm \int\limits_{0}^{\ell t}x\phi _{x}\phi _{t}\ dx\leq \ell t\text{ }%
E\left( t\right) \text{, \ \ \ for }t\geq t_{0}.
\end{equation*}
Taking into account (\ref{est0}), it comes that 
\begin{equation}
t\left( 1-\ell \right) E\left( t\right) \leq S_{\ell }\text{ \ \ and\ \ \ }%
t\left( 1+\ell \right) E\left( t\right) \geq S_{\ell }.  \label{ESES}
\end{equation}
This implies (\ref{ES}) and the lemma is proved.
\end{proof}

\begin{remark}
Since (\ref{ESES}) holds also for $t=t_{0}$, then we get%
\begin{equation*}
t\left( 1-\ell \right) E\left( t\right) \leq t_{0}\left( 1+\ell \right)
E\left( t_{0}\right) \text{ \ \ and\ \ \ }t\left( 1+\ell \right) E\left(
t\right) \geq t_{0}\left( 1-\ell \right) E\left( t_{0}\right) ,
\end{equation*}%
for $t\geq t_{0}.$ Thus the energy $E\left( t\right) $ satisfies%
\begin{equation*}
\frac{\left( 1-\ell \right) t_{0}E\left( t_{0}\right) }{\left( 1+\ell
\right) t}\leq E\left( t\right) \leq \frac{\left( 1+\ell \right)
t_{0}E\left( t_{0}\right) }{\left( 1-\ell \right) t},\ \ \ \ \ \text{\ for }%
t\geq t_{0}.
\end{equation*}%
An analog inequality was obtained in \cite{SLL:15}, by the multiplier method.
\end{remark}

\section{Observability and controllability at the fixed endpoint.}

In this section, we show the observability of (\ref{wave}) at $\xi =0$, then
by applying HUM we deduce the exact controllability of (\ref{wavec}). First,
we can state the following Lemma.

\begin{lemma}
\label{lmx0}Under the assumption \emph{(\ref{tlike}),} the solution of \emph{%
(\ref{wave})} satisfies 
\begin{equation}
\int_{t_{0}}^{\left( \frac{1+\ell }{1-\ell }\right) ^{M}t_{0}}t\phi
_{x}^{2}(0,t)dt=4MS_{\ell }  \label{=slf}
\end{equation}%
for any integer $M\geq 1,$ and it holds that%
\begin{equation}
4Mt_{0}\left( 1-\ell \right) E\left( t_{0}\right) \leq \int_{t_{0}}^{\left( 
\frac{1+\ell }{1-\ell }\right) ^{M}t_{0}}t\text{ }\phi _{x}^{2}(0,t)dt\leq
4Mt_{0}\left( 1+\ell \right) E\left( t_{0}\right) .  \label{EB0}
\end{equation}
\end{lemma}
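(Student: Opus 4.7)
The plan is to work directly from the Fourier representation (\ref{phx}) evaluated at $x=0$. Setting $x=0$ collapses the two exponential terms into a single one, yielding
\[
\phi_x(0,t)=\frac{2\pi\alpha_\ell}{t}\sum_{-\infty}^{+\infty} in C_n\, e^{in\pi\alpha_\ell \log t},
\]
so that $t\,\phi_x^2(0,t)$ equals $\frac{1}{t}\bigl|t\phi_x(0,t)\bigr|^2$ times the square of a pure Fourier series in the variable $\log t$. This sets up the whole computation as a Parseval calculation in the right variable.

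Next I would perform the change of variable $z=\alpha_\ell\log t$, so that $dt/t=dz/\alpha_\ell$. Because $\alpha_\ell=2/\log\!\bigl(\tfrac{1+\ell}{1-\ell}\bigr)$, the interval $t\in\bigl(t_0,\bigl(\tfrac{1+\ell}{1-\ell}\bigr)^M t_0\bigr)$ maps bijectively onto an interval of length exactly $2M$, say $(z_0,z_0+2M)$ with $z_0=\alpha_\ell\log t_0$. Thus
\[
\int_{t_0}^{\left(\frac{1+\ell}{1-\ell}\right)^{M}t_0} t\,\phi_x^2(0,t)\,dt
=\frac{4\pi^2\alpha_\ell^2}{\alpha_\ell}\int_{z_0}^{z_0+2M}\Bigl|\sum_{-\infty}^{+\infty} in C_n\, e^{in\pi z}\Bigr|^2 dz.
\]
Now I invoke the orthonormality of $\{(1/\sqrt 2)e^{in\pi z}\}$ on any interval of length $2$ (this is the standard Fourier basis on $(0,2)$, extended by $2$-periodicity). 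Splitting $(z_0,z_0+2M)$ into $M$ consecutive intervals of length $2$ and applying Parseval on each gives
\[
\int_{z_0}^{z_0+2M}\Bigl|\sum in C_n\, e^{in\pi z}\Bigr|^2 dz = 2M\sum_{-\infty}^{+\infty} n^2|C_n|^2,
\]
and plugging back produces exactly $4MS_\ell$, establishing (\ref{=slf}).

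For the estimate (\ref{EB0}), I would simply recall the two-sided bound (\ref{ESES}) from the proof of Lemma~\ref{th1}, specialised to $t=t_0$:
\[
t_0(1-\ell)E(t_0)\le S_\ell \le t_0(1+\ell)E(t_0).
\]
Multiplying through by $4M$ and substituting into (\ref{=slf}) yields (\ref{EB0}).

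The only mildly delicate point is the change-of-variable step: one has to verify that the endpoints match the length-$2M$ interval exactly, which is precisely why the time horizon is written as $\bigl(\tfrac{1+\ell}{1-\ell}\bigr)^M t_0$ and why $\alpha_\ell$ is defined as it is. Interchange of the series and integral is harmless here because $\phi_x\in L^2$ (already used in the proof of Lemma~\ref{th1} to justify (\ref{E2})), so the trace on $\{x=0\}$ inherits the requisite summability and the Parseval identity applies unambiguously.
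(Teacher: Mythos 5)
Your proposal is correct and follows essentially the same route as the paper: evaluate (\ref{phx}) at $x=0$, recognize $t\phi_x(0,t)$ as a Fourier series in $\alpha_\ell\log t$, apply Parseval over the interval whose logarithmic length is exactly $2M$ (the paper phrases this as $\{\sqrt{\alpha_\ell/2M}\,e^{in\pi\alpha_\ell\log t}\}_{n\in\mathbb{Z}}$ being a complete orthonormal set in $L^{2}\bigl(t_0,\bigl(\tfrac{1+\ell}{1-\ell}\bigr)^{M}t_0,\tfrac{dt}{t}\bigr)$, which is precisely your change of variable plus periodicity), and then deduce (\ref{EB0}) from (\ref{ESES}) at $t=t_0$. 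The constants all check out, so nothing further is needed.
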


\begin{proof}
Taking $x=0$ in (\ref{phx}), we obtain%
\begin{equation}
t\phi _{x}(0,t)=2\pi \alpha _{\ell }\sum\limits_{-\infty }^{+\infty
}inC_{n}\ e^{in\pi \alpha _{\ell }\log t}.
\end{equation}%
Noting that%
\begin{equation*}
e^{in\pi \alpha _{\ell }\log \left( \left( \frac{1+\ell }{1-\ell }\right)
^{M}t_{0}\right) }=e^{in\pi \alpha _{\ell }\log t_{0}}\times e^{inM\pi
\alpha _{\ell }\log \left( \frac{1+\ell }{1-\ell }\right) }=e^{in\pi \alpha
_{\ell }\log t_{0}},
\end{equation*}%
then, we can check that $\left\{ \sqrt{\alpha _{\ell }/2M}\ e^{in\pi \alpha
_{\ell }\log t}\right\} _{n\in 
\mathbb{Z}
}$ is a complete orthonormal set in the space $L^{2}\left( t_{0},\left( 
\frac{1+\ell }{1-\ell }\right) ^{M}t_{0},\frac{dt}{t}\right) $ and by
Parseval's equality it follows that 
\begin{equation*}
\int_{t_{0}}^{\left( \frac{1+\ell }{1-\ell }\right) ^{M}t_{0}}t\phi
_{x}^{2}(0,t)dt=\int_{t_{0}}^{\left( \frac{1+\ell }{1-\ell }\right)
^{M}t_{0}}t^{2}\phi _{x}^{2}(0,t)\frac{dt}{t}=\left( \frac{2M}{\alpha _{\ell
}}\right) 4\pi ^{2}\alpha _{\ell }^{2}\sum\limits_{-\infty }^{+\infty
}\left\vert nC_{n}\right\vert ^{2}
\end{equation*}%
which implies (\ref{=slf}). The estimate (\ref{EB0}) follows by using (\ref%
{ESES}) for $t=t_{0}$.
\end{proof}

\begin{remark}
\label{RMKD0}From (\ref{EB0}) we infer that%
\begin{equation*}
\int_{t_{0}}^{\left( \frac{1+\ell }{1-\ell }\right) ^{M}t_{0}}\phi
_{x}^{2}(0,t)dt\leq 4M\left( 1-\ell \right) E\left( t_{0}\right) .
\end{equation*}%
Then, for any $T\geq t_{0},$ we can always choose an integer $M$ such that $%
T\leq \left( \frac{1+\ell }{1-\ell }\right) ^{M}t_{0}$ and since the
integrated function is nonnegative, we deduce the following (so-called
direct) inequality 
\begin{equation}
\int_{t_{0}}^{T}\phi _{x}^{2}(0,t)dt\leq 4M\left( 1-\ell \right) E\left(
t_{0}\right) .  \label{D0}
\end{equation}
\end{remark}

Noting that $\phi _{t}\left( 0,t\right) =0,\forall t\geq t_{0}$, inequality (%
\ref{ct}) reads 
\begin{equation*}
E\left( t_{0}\right) \leq C(T_{0})\int_{t_{0}}^{t_{0}+T_{0}}\phi
_{x}^{2}\left( 0,t\right) dt,
\end{equation*}%
hence Lemma \ref{lmx0} is very useful to establish the observability of (\ref%
{wave})\emph{\ }at $\xi=0$.

\begin{theorem}
Under the assumption \emph{(\ref{tlike}),} if $T_{0}\geq 2\ell t_{0}/\left(
1-\ell \right) ,$ Problem \emph{(\ref{wave})} is observable at the fixed
endpoint $\xi=0$ and it holds that 
\begin{equation}
E\left( t_{0}\right) \leq \frac{1+\ell }{4\left( 1-\ell \right) ^{2}}%
\int_{t_{0}}^{t_{0}+T_{0}}\phi _{x}^{2}(0,t)dt.  \label{obs0}
\end{equation}

Conversely, if $T_{0}<2\ell t_{0}/\left( 1-\ell \right) ,$ \emph{(\ref{wave})%
} is not observable at $\xi=0$.
\end{theorem}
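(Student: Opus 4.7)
The plan is to split the theorem into two directions and feed both through the $M=1$ case of Lemma~\ref{lmx0}. The critical observation is that the time $T_0=2\ell t_0/(1-\ell)$ makes the endpoint $t_0+T_0$ coincide exactly with $\bigl(\tfrac{1+\ell}{1-\ell}\bigr)^{M}t_0$ for $M=1$, so Lemma~\ref{lmx0} provides the identity $\int_{t_0}^{t_0+T_0} t\,\phi_x^2(0,t)\,dt = 4S_\ell$ precisely at the sharp observation horizon.

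For the direct (observability) inequality, I would first handle the case $T_0=2\ell t_0/(1-\ell)$. Bound the weight $t$ above by $(1+\ell)t_0/(1-\ell)$ on the integration interval to drop it from the identity, giving
\begin{equation*}
\int_{t_0}^{t_0+T_0}\phi_x^2(0,t)\,dt \;\geq\; \frac{4(1-\ell)\,S_\ell}{(1+\ell)\,t_0}.
\end{equation*}
Then apply the lower bound $S_\ell\geq (1-\ell)t_0\,E(t_0)$ from (\ref{ESES}) to conclude $E(t_0)\leq \frac{1+\ell}{4(1-\ell)^2}\int_{t_0}^{t_0+T_0}\phi_x^2(0,t)\,dt$. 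For any $T_0>2\ell t_0/(1-\ell)$, the nonnegativity of $\phi_x^2(0,\cdot)$ lets me enlarge the integration interval on the right-hand side for free, so the same constant continues to work.

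For the converse (non-observability for $T_0<2\ell t_0/(1-\ell)$), the strategy is to exhibit a single nonzero solution whose normal derivative at $\xi=0$ vanishes identically on $[t_0,t_0+T_0]$. Introduce $s=\alpha_\ell\log t$; then $t\phi_x(0,t)=2\pi\alpha_\ell\,g(s)$ where $g(s)=\sum_n inC_n\,e^{in\pi s}$ is a mean-zero, real-valued, $2$-periodic $L^2$ function. The assumption $T_0<2\ell t_0/(1-\ell)$ translates to $s(t_0+T_0)-s(t_0)<2$, i.e.\ the observation window covers strictly less than one period. I would construct $g$ as a smooth bump supported in the complementary subinterval of $[s(t_0),\,s(t_0)+2]$, adjusted (e.g.\ by superposing two bumps of opposite sign) to have mean zero, and declare the $C_n$'s by $C_n=b_n/(in)$ with $C_{-n}=\overline{C_n}$ and $C_0=0$. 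The rapid decay of $b_n$ then guarantees $S_\ell=2\pi^2\alpha_\ell\sum|nC_n|^2<\infty$ and in fact $S_\ell>0$, so $E(t_0)>0$ by (\ref{ES}), while by construction $\phi_x(0,t)\equiv 0$ on $[t_0,t_0+T_0]$. This single counterexample refutes (\ref{ct}) for every finite constant.

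The genuinely delicate step is the converse: one has to verify that the coefficients produced this way really correspond to admissible initial data $(\phi^0,\phi^1)\in H^1_0(\Omega_{t_0})\times L^2(\Omega_{t_0})$, and that the reality/parity conventions (odd in $x$, $C_{-n}=\overline{C_n}$, $C_0=0$, $g$ real and mean-zero) are all compatible; the direct inequality, by contrast, is essentially a one-line consequence of Lemma~\ref{lmx0} once the right value of $M$ is identified.
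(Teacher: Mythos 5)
Your direct inequality is correct and is essentially the paper's own argument: the author also takes $M=1$ in Lemma~\ref{lmx0}, uses the observation $t_0+2\ell t_0/(1-\ell)=\tfrac{1+\ell}{1-\ell}t_0$, bounds the weight $t$ by its maximum on the interval, and invokes the lower bound from (\ref{ESES}); monotonicity of the integral then handles $T_0>2\ell t_0/(1-\ell)$. Where you genuinely diverge is the converse. The paper argues geometrically: it solves the wave equation backwards and forwards from data prescribed at an intermediate time $s=(t_0-\delta)/(1-\ell)$, supported in a small interval $(\ell s-\delta,\ell s)$ near the moving endpoint, and uses finite speed of propagation (domain of influence) to conclude $u_x(0,t)=0$ on $(t_0+\delta,T_\delta-\delta)$. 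You instead stay entirely inside the Fourier framework: since $t\phi_x(0,t)=2\pi\alpha_\ell\,g(\alpha_\ell\log t)$ with $g$ a $2$-periodic, mean-zero, real $L^2$ function, and since $T_0<2\ell t_0/(1-\ell)$ is exactly equivalent to $\alpha_\ell\log\tfrac{t_0+T_0}{t_0}<2$, you can choose $g$ as a nonzero smooth mean-zero bump supported in the complementary arc of one period and read off the coefficients $C_n=b_n/(in)$. Both arguments are valid. Yours makes the sharpness of the time completely transparent (the observation window covers less than one full period of $g$) and produces an explicit counterexample family with $S_\ell>0$ hence $E(t_0)>0$ by (\ref{ES}); the paper's is more robust and would survive in settings with no explicit Fourier representation. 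The one step you flag but do not carry out --- admissibility of the constructed coefficients --- is genuinely needed but short: any sequence with $C_0=0$, $C_{-n}=\overline{C_n}$ and $\sum|nC_n|^2<\infty$ defines via the Fourier formula a real finite-energy solution satisfying both boundary conditions (the condition at $x=\ell t$ holds automatically because $\alpha_\ell\log\tfrac{1+\ell}{1-\ell}=2$), and its traces at $t=t_0$ give $(\phi^0,\phi^1)\in H_0^1(\Omega_{t_0})\times L^2(\Omega_{t_0})$ by the Parseval identity (\ref{E2}). With that sentence added your proof is complete.
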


\begin{proof}
Noting that $\frac{\left( 1+\ell \right) t_{0}}{\left( 1-\ell \right) }%
=t_{0}+\frac{2\ell t_{0}}{\left( 1-\ell \right) }$ and taking $M=1$ in (\ref{EB0}),
then we get 
\begin{equation*}
4t_{0}\left( 1-\ell \right) \text{ }E\left( t_{0}\right) \leq \frac{1+\ell }{%
1-\ell }t_{0}\int_{t_{0}}^{t_{0}+\frac{2}{1-\ell }\ell t_{0}}\phi
_{x}^{2}(0,t)dt
\end{equation*}%
and thus inequality (\ref{obs0}) holds for $T_{0}=2\ell t_{0}/\left( 1-\ell
\right) $ and therefore for any  \linebreak $T_{0}>2\ell t_{0}/\left( 1-\ell \right) $
as well.

To show that the observability does not hold for $T_{0}<2\ell t_{0}/\left(
1-\ell \right) $, we adapt a proof found in \cite{Zua:05} where the wave
equation is considered in a fixed interval. Set $T_{\delta }=\frac{\left(
1+\ell \right) t_{0}-2\delta }{  1-\ell } $ for some $%
\delta >0$ sufficiently small, then solve 
\begin{equation}
\left\{ 
\begin{array}{ll}
u_{tt}-u_{xx}=0,\ \medskip & \text{in }Q_{T}, \\ 
u\left( 0,t\right) =0,\text{ \ }u\left( \ell \left( t\right) ,t\right) =0, \medskip & 
\ \text{for \ }t\in \left( t_{0},T_{\delta }\right) , \\ 
u(x,s)=u^{0}\left( x\right) ,\text{ \ \ }u_{t}\left( x,s\right) =u^{1}\left(
x\right) , & \ \text{for \ \ }x\in \Omega _{s},%
\end{array}%
\right.  \label{waveu}
\end{equation}%
with data at time $s=\left( t_{0}-\delta \right) /\left( 1-\ell \right) $
with support in the subinterval $(\ell s-\delta ,\ell s),$ (see Figure 1a).
Let us check that the solution of (\ref{waveu}) is unique. In one hand, $u$
is unique for $t\geq s$ as it satisfy a wave equation, in an interval
expanding at a speed $\ell <1$, with initial data at $t=s$. On the other
hand, setting%
\begin{equation*}
U\left( t\right) =u\left( s-t\right) ,\text{ \ \ for }t\in \left(
t_{0},s\right) ,
\end{equation*}%
then $U$ is unique as it satisfy a wave equation, in an interval
contracting at a speed $\ell <1$, with initial data (see \cite{BaC:81})%
\begin{equation*}
U(x,0)=u^{0}\left( x\right) ,\ \ U_{t}\left( x,0\right)
=u^{1}\left( x\right) \text{ \ \ \ in }\left( 0,\ell s\right) .
\end{equation*}%
Thus $u$ is uniquely determined for $t\geq t_{0}.$ This solution is such
that $u_{x}(0,t)=0$ for $t_{0}+\delta <t<T_{\delta }-\delta $ since the
segment $x=0,t\in (t_{0}+\delta ,T_{\delta }-\delta )$ remains outside the
domain of influence of the segment $t=s,x\in (\ell s-\delta ,\ell s)$. This
means that%
\begin{equation*}
\int_{t_{0}+\delta }^{T_{\delta }-\delta }\phi _{x}^{2}(0,t)dt=0,\text{ \ \
\ \ }\forall \delta >0,
\end{equation*}%
and the observability inequality (\ref{obs0}) does not hold.
\end{proof}

\begin{figure}[htbp]
\centering
\includegraphics[width=120mm,height=55mm]{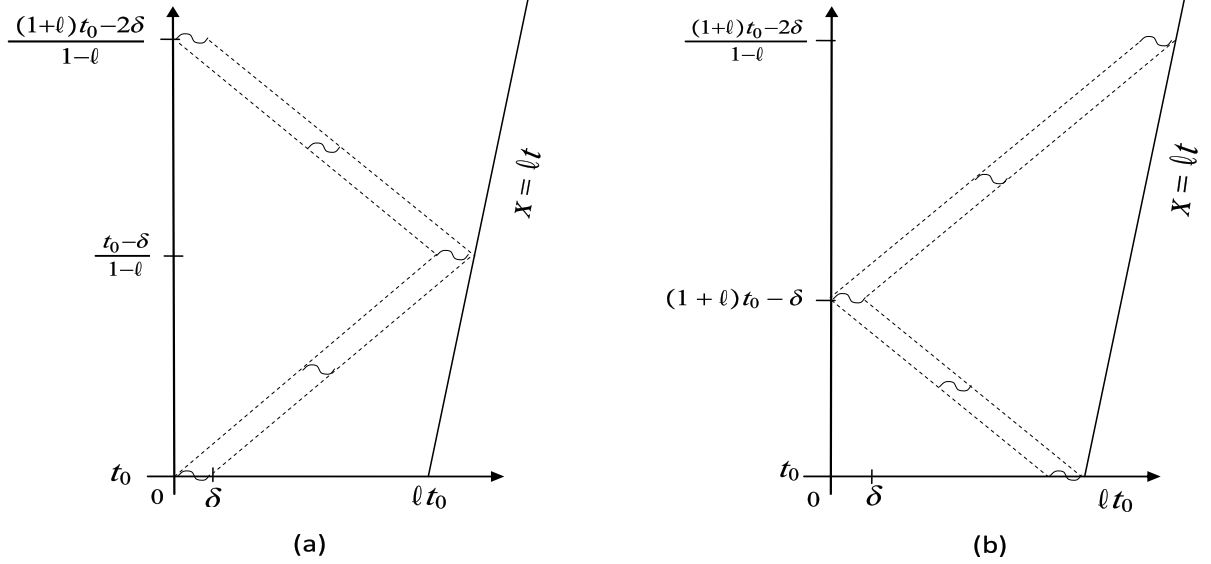}
\caption{Propagation of a wave with a small support near an endpoint}
\label{fig:figseng}
\end{figure}

\begin{remark}
\label{rmktobs}An initial disturbance concentrated near $x=0$ may propagate
to the right, as $t$ increases, and bounce back on the moving boundary$,$
when $t$ is close to $\frac{t_{0}}{1-\ell },$\ then travel to the left to
reach the fixed boundary, when $t$ is close to $\frac{1+\ell }{1-\ell }t_{0}$%
, (see Figure 1a). Thus the needed time to complete this journey is close to 
$\frac{ \left( 1+\ell \right) t_{0}}{ 1-\ell }-t_{0}=2\ell t_{0}/\left( 1-\ell \right) ,$ which is the critical time of
observability.
\end{remark}

\begin{remark}
Let us fix the initial length $\ell t_{0}=1$ (by choosing $t_{0}=1/\ell $).
Then, as $\ell \rightarrow 0,$ we recover the critical time of observability 
$T_{0}=2$ of the wave equation in the fixed interval $\left( 0,1\right) .$
\end{remark}

The idea of HUM is based on the equivalence between the observability at $%
\xi=0 $ of the homogeneous problem (\ref{wave}) and the exact controllability
at $\xi=0$ of the non-homogeneous problem (\ref{wavec}). The proof of this
equivalence for the wave equation in an interval with fixed ends (see, for
instance, pages 53--57 of \cite{Komo:94}) can be carried out without much
difficulty to yield the same result for an interval with moving ends. Whence
we have the following controllability result.

\begin{corollary}
Under the assumption \emph{(\ref{tlike}),} Problem \emph{(\ref{wavec}) }is
exactly controllable at the fixed endpoint $\xi =0$ for $T_{0}\geq 2\ell
t_{0}/\left( 1-\ell \right) .$ Moreover, we can choose the control $v$
satisfying 
\begin{equation}
\int_{t_{0}}^{t_{0}+T_{0}}v^{2}\left( t\right) dt\leq K\left( T_{0},\ell \right)
E\left( t_{0}\right) ,  \label{v0}
\end{equation}%
where $K\left( T_{0},\ell \right) $ is a constant depending on $T_{0}$ and $%
\ell .$

Conversely, if $T_{0}<2\ell t_{0}/\left( 1-\ell \right) ,$ \emph{(\ref{wavec}%
)} is not controllable at $\xi =0.$
\end{corollary}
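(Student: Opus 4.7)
The plan is to apply the Hilbert Uniqueness Method in the Lions--Komornik form, with the observability inequality (\ref{obs0}) supplying coercivity and the direct inequality (\ref{D0}) of Remark \ref{RMKD0} supplying continuity. On the energy space $\mathcal{H}:=H_{0}^{1}(\Omega_{t_{0}})\times L^{2}(\Omega_{t_{0}})$ I would introduce the symmetric bilinear form
\begin{equation*}
a\bigl((\phi^{0},\phi^{1}),(\widetilde{\phi}^{0},\widetilde{\phi}^{1})\bigr):=\int_{t_{0}}^{t_{0}+T_{0}}\phi_{x}(0,t)\,\widetilde{\phi}_{x}(0,t)\,dt,
\end{equation*}
where $\phi,\widetilde{\phi}$ denote the solutions of (\ref{wave}) for the respective initial data. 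When $T_{0}\geq 2\ell t_{0}/(1-\ell)$, inequalities (\ref{D0}) and (\ref{obs0}) make $a$ continuous and coercive on $\mathcal{H}$, with coercivity constant $4(1-\ell)^{2}/(1+\ell)$.

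Next, I would realize $a$ as a duality pairing. Given $(\phi^{0},\phi^{1})\in\mathcal{H}$, I would solve in the sense of transposition (as in \cite{Mir:96,MML:13}) the backward wave problem in $Q_{T}$ with boundary datum $\phi_{x}(0,\cdot)$ at $x=0$, zero datum at $x=\ell t$, and zero final conditions at $t=T$; call the resulting solution $\psi$. The HUM operator $\Lambda:\mathcal{H}\to\mathcal{H}':=H^{-1}(\Omega_{t_{0}})\times L^{2}(\Omega_{t_{0}})$ defined by $\Lambda(\phi^{0},\phi^{1}):=(\psi_{t}(t_{0}),-\psi(t_{0}))$ then satisfies the key identity
\begin{equation*}
\langle\Lambda(\phi^{0},\phi^{1}),(\phi^{0},\phi^{1})\rangle_{\mathcal{H}',\mathcal{H}}=a\bigl((\phi^{0},\phi^{1}),(\phi^{0},\phi^{1})\bigr),
\end{equation*}
obtained by multiplying (\ref{wave}) by $\psi$ and integrating by parts over $Q_{T}$. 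Lax--Milgram then gives that $\Lambda$ is an isomorphism. For arbitrary target data $(y_{T}^{0},y_{T}^{1})$, I would first subtract the free evolution of (\ref{wavec}) with $v\equiv 0$ from the target, invert $\Lambda$ on the resulting element of $\mathcal{H}'$, and take $v(t):=\phi_{x}(0,t)$ as the control. Estimate (\ref{v0}) follows at once from $\|v\|_{L^{2}(t_{0},t_{0}+T_{0})}^{2}=a((\phi^{0},\phi^{1}),(\phi^{0},\phi^{1}))$ combined with the Lax--Milgram bound on $(\phi^{0},\phi^{1})$, the constant $K(T_{0},\ell)$ being proportional to the square of the observability constant in (\ref{obs0}).

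For the converse, I would argue by contrapositive: if (\ref{wavec}) were controllable at $\xi=0$ for some $T_{0}<2\ell t_{0}/(1-\ell)$, then testing any homogeneous solution $\phi$ of (\ref{wave}) against a control $v$ steering $(0,0)$ to the dual target $(-\phi^{1},\phi^{0})$ and running the same integration-by-parts identity in reverse would produce an observability inequality at $\xi=0$ for this sub-critical horizon, contradicting the preceding Theorem.

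The main obstacle I anticipate is the careful bookkeeping of the boundary contributions at the moving endpoint $x=\ell t$ in the Green--type identity underlying the HUM equality above. Because of the time-dependent boundary, extra terms proportional to the normal velocity $\ell$ appear; these must be shown to cancel, which they do because both $\phi$ and $\psi$ satisfy homogeneous Dirichlet conditions on $\Sigma_{T}$, forcing the tangential relation $\phi_{t}+\ell\phi_{x}=0$ at $x=\ell t$, and similarly for $\psi$. A secondary technical point is the rigorous meaning of the traces of $\psi$ at the moving endpoint under a merely $L^{2}$ boundary datum, but this is already accommodated by the transposition framework of \cite{BaC:81,Mir:96,MML:13}.
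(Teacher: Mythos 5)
Your proposal is correct and follows essentially the same route as the paper, which simply invokes the standard HUM equivalence between observability at $\xi=0$ and controllability (citing Komornik, pp.~53--57) together with the remark that $v=\phi_{x}(0,t)$ and that (\ref{v0}) follows from the direct inequality (\ref{D0}). You have merely written out in detail the Lax--Milgram/transposition machinery that the paper leaves implicit.
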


\begin{remark}
The control obtained by HUM is $v=\phi _{x}\left( 0,t\right) $, where $\phi $
is the solution of (\ref{wave}) with some suitable choice of the initial
conditions, see also the proof of Theorem 2.1 in \cite{CJW:15}. Inequality (\ref{v0}) is a consequence of (\ref{D0}). 
\end{remark}

\section{Observability and controllability at the moving endpoint.}

In this section, we show the observability and the controllability at $\xi
=\ell t$. Let us start with the following lemma.

\begin{lemma}
\label{lmlt}Under the assumption \emph{(\ref{tlike}),} the solution of \emph{%
(\ref{wave})} satisfies 
\begin{equation}
\int_{t_{0}}^{\left( \frac{1+\ell }{1-\ell }\right) ^{M}t_{0}}t\phi
_{x}^{2}(\ell t,t)dt=\frac{4M}{\left( 1-\ell ^{2}\right) ^{2}}S_{\ell }
\label{=slm}
\end{equation}%
for any integer $M\geq 1,$ and it holds that%
\begin{equation}
\frac{4Mt_{0}}{\left( 1+\ell \right) ^{2}\left( 1-\ell \right) }E\left(
t_{0}\right) \leq \int_{t_{0}}^{\left( \frac{1+\ell }{1-\ell }\right)
^{M}t_{0}}t\phi _{x}^{2}(\ell t,t)dt\leq \frac{4Mt_{0}}{\left( 1-\ell
\right) ^{2}\left( 1+\ell \right) }\ E\left( t_{0}\right) .  \label{EBlt}
\end{equation}
\end{lemma}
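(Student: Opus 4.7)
The plan is to imitate the proof of Lemma~\ref{lmx0}, substituting $x=\ell t$ rather than $x=0$ into the formula (\ref{phx}) for $\phi_x$. Using the exponent identity $\log(t\pm\ell t)=\log(1\pm\ell)+\log t$, I would write
\begin{equation*}
t\phi_{x}(\ell t,t)=\pi\alpha_{\ell}\sum_{-\infty}^{+\infty}inC_{n}\,A_{n}(\ell)\,e^{in\pi\alpha_{\ell}\log t},
\end{equation*}
where
\begin{equation*}
A_{n}(\ell):=\frac{e^{in\pi\alpha_{\ell}\log(1+\ell)}}{1+\ell}+\frac{e^{in\pi\alpha_{\ell}\log(1-\ell)}}{1-\ell}.
\end{equation*}

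Next I would apply Parseval's equality in $L^{2}\!\left(t_{0},\left(\tfrac{1+\ell}{1-\ell}\right)^{M}t_{0},\tfrac{dt}{t}\right)$ against the complete orthonormal system $\{\sqrt{\alpha_{\ell}/(2M)}\,e^{in\pi\alpha_{\ell}\log t}\}_{n\in\mathbb{Z}}$ (already justified in the proof of Lemma~\ref{lmx0}) to obtain
\begin{equation*}
\int_{t_{0}}^{(\frac{1+\ell}{1-\ell})^{M}t_{0}}t\,\phi_{x}^{2}(\ell t,t)\,dt
=\frac{2M}{\alpha_{\ell}}\,\pi^{2}\alpha_{\ell}^{2}\sum_{-\infty}^{+\infty}n^{2}|C_{n}|^{2}\,|A_{n}(\ell)|^{2}.
\end{equation*}

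The key step, and the main ``obstacle,'' is to show that $|A_{n}(\ell)|^{2}$ is independent of $n$. By the very definition of $\alpha_{\ell}=2/\log\!\left(\frac{1+\ell}{1-\ell}\right)$, the difference of phases satisfies
\begin{equation*}
n\pi\alpha_{\ell}\bigl(\log(1+\ell)-\log(1-\ell)\bigr)=2n\pi,
\end{equation*}
so the cross term in $|A_{n}(\ell)|^{2}$ has cosine equal to $1$, giving
\begin{equation*}
|A_{n}(\ell)|^{2}=\left(\frac{1}{1+\ell}+\frac{1}{1-\ell}\right)^{\!2}=\frac{4}{(1-\ell^{2})^{2}}.
\end{equation*}
Plugging this back and recognizing $2\pi^{2}\alpha_{\ell}\sum n^{2}|C_{n}|^{2}=S_{\ell}$ produces the equality (\ref{=slm}).

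Finally, the sandwich estimate (\ref{EBlt}) follows by inserting the bounds $t_{0}(1-\ell)E(t_{0})\leq S_{\ell}\leq t_{0}(1+\ell)E(t_{0})$ from (\ref{ESES}) at $t=t_{0}$ and simplifying $(1-\ell^{2})^{2}=(1-\ell)^{2}(1+\ell)^{2}$; no further work is required beyond this routine manipulation.
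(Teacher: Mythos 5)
Your proposal is correct and follows essentially the same route as the paper: substitute $x=\ell t$ into (\ref{phx}), use $n\pi\alpha_{\ell}\log\frac{1+\ell}{1-\ell}=2n\pi$ to reduce the coefficient to one of modulus $\frac{2}{1-\ell^{2}}|n\pi\alpha_{\ell}C_{n}|$, apply Parseval in $L^{2}\bigl(t_{0},(\tfrac{1+\ell}{1-\ell})^{M}t_{0},\tfrac{dt}{t}\bigr)$, and then insert (\ref{ESES}) at $t=t_{0}$. The only cosmetic difference is that the paper factors the common phase $e^{in\pi\alpha_{\ell}\log(1+\ell)}$ out of your $A_{n}(\ell)$ directly rather than computing $|A_{n}(\ell)|^{2}$ via the cross term, which is the same observation.
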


\begin{proof}
Taking $x=\ell t$ in (\ref{phx}), we get%
\begin{equation*}
\phi _{x}(\ell t,t)=\pi \alpha _{\ell }\sum\limits_{-\infty }^{+\infty
}inC_{n}\left( \dfrac{e^{in\pi \alpha _{\ell }\log \left( 1+\ell \right) }}{%
\left( 1+\ell \right) }+\dfrac{e^{in\pi \alpha _{\ell }\log \left( 1-\ell
\right) }}{\left( 1-\ell \right) }\right) \frac{e^{in\pi \alpha _{\ell }\log
t}}{t},
\end{equation*}%
i.e.%
\begin{equation*}
t\phi _{x}(\ell t,t)=\frac{2\pi \alpha _{\ell }}{\left( 1-\ell ^{2}\right) }%
\sum\limits_{-\infty }^{+\infty }\left( inC_{n}\ e^{in\pi \alpha _{\ell
}\log \left( 1+\ell \right) }\right) e^{in\pi \alpha _{\ell }\log t}.
\end{equation*}%
Then, by Parseval's equality in $L^{2}\left( t_{0},\left( \frac{1+\ell }{%
1-\ell }\right) ^{M}t_{0},\frac{dt}{t}\right) $ applied to the function $%
t\phi _{x}(\ell t,t)$, we infer that%
\begin{equation*}
\int_{t_{0}}^{\left( \frac{1+\ell }{1-\ell }\right) ^{M}t_{0}}t\phi
_{x}^{2}(\ell t,t)dt=\left( \frac{2M}{\alpha _{\ell }}\right) \frac{4\pi
^{2}\alpha _{\ell }^{2}}{\left( 1-\ell ^{2}\right) ^{2}}\sum\limits_{-\infty
}^{+\infty }\left\vert nC_{n}\right\vert ^{2}
\end{equation*}%
which implies (\ref{=slm}). The estimate (\ref{EBlt}) follows by using (\ref%
{ESES}) for $t=t_{0}$.
\end{proof}

\begin{remark}
Inequality (\ref{EBlt}) also yields%
\begin{equation*}
\int_{t_{0}}^{\left( \frac{1+\ell }{1-\ell }\right) ^{M}t_{0}}\phi
_{x}^{2}(\ell t,t)dt\leq \frac{4M}{\left( 1-\ell \right) ^{2}\left( 1+\ell
\right) }\ E\left( t_{0}\right) .
\end{equation*}%
For any $T\geq t_{0},$ taking $M$ such that $T\leq \left( \frac{1+\ell }{%
1-\ell }\right) ^{M}t_{0},$ we have the following direct inequality 
\begin{equation}
\int_{t_{0}}^{T}\phi _{x}^{2}(\ell t,t)dt\leq \frac{4M}{\left( 1-\ell
\right) ^{2}\left( 1+\ell \right) }\ E\left( t_{0}\right) .  \label{Dlt}
\end{equation}
\end{remark}

Since $\phi \left( \ell t,t\right) =0,\forall t\geq t_{0},$ then $\phi
_{t}\left( \ell t,t\right) +\ell \phi _{x}\left( \ell t,t\right) =0,\forall
t\geq t_{0}.$ In this case, inequality (\ref{ct}) reads 
\begin{equation*}
E\left( t_{0}\right) \leq C^{\prime }(T_{0})\left( 1+\ell ^{2}\right)
\int_{t_{0}}^{t_{0}+T_{0}}\phi _{x}^{2}\left( \ell t,t\right) dt.
\end{equation*}
Thus Lemma \ref{lmlt} can be used to show the observability at $\xi =\ell t$.

\begin{theorem}
Under the assumption \emph{(\ref{tlike}),} if $T_{0}\geq 2\ell t_{0}/\left(
1-\ell \right) ,$ Problem \emph{(\ref{wave})} is observable at the moving
endpoint $\xi =\ell t$ and it holds that 
\begin{equation}
E\left( t_{0}\right) \leq \frac{\left( 1+\ell \right) ^{3}}{4}%
\int_{t_{0}}^{t_{0}+T_{0}}\phi _{x}^{2}(\ell t,t)dt.  \label{obslt}
\end{equation}

Conversely, if $T_{0}<2\ell t_{0}/\left( 1-\ell \right) ,$ \emph{(\ref{wave})%
} is not observable at $\xi=\ell t$.
\end{theorem}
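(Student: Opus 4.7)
The plan is to obtain (\ref{obslt}) as a direct consequence of Lemma~\ref{lmlt} with $M=1$, and then to construct a counterexample in the subcritical range $T_0 < 2\ell t_0/(1-\ell)$ by mirroring the fixed-endpoint argument to the opposite side of the interval.

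For the direct part, I would apply (\ref{EBlt}) with $M=1$, noting that the upper limit $((1+\ell)/(1-\ell))t_0$ equals precisely $t_0 + 2\ell t_0/(1-\ell)$. On this interval one has $t \leq (1+\ell)t_0/(1-\ell)$, hence
\begin{equation*}
\frac{4t_0}{(1+\ell)^2(1-\ell)}\,E(t_0) \;\leq\; \int_{t_0}^{t_0 + 2\ell t_0/(1-\ell)} t\,\phi_x^2(\ell t,t)\,dt \;\leq\; \frac{(1+\ell)t_0}{1-\ell}\int_{t_0}^{t_0 + 2\ell t_0/(1-\ell)} \phi_x^2(\ell t,t)\,dt.
\end{equation*}
Cancelling $t_0$ and collecting constants gives (\ref{obslt}) at the critical time $T_0 = 2\ell t_0/(1-\ell)$, and the inequality extends to any larger $T_0$ because $\phi_x^2(\ell t,t) \geq 0$. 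This step is essentially algebra once the lemma is in hand.

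For the converse, I would mirror the construction used for $\xi=0$, swapping the roles of the two boundaries. Fix $T_0 < 2\ell t_0/(1-\ell)$. Because $(1-\ell)T_0 < 2\ell t_0$, one has $(1-\ell)(t_0+T_0) < (1+\ell)t_0$, so for $\delta>0$ small enough the interval $[(1-\ell)(t_0+T_0) + \delta,\ (1+\ell)t_0 - \delta]$ is nonempty. Pick $s$ in this interval and, as in (\ref{waveu}), solve (\ref{wave}) with arbitrary nonzero data at $t=s$ supported in $(0,\delta)$, propagating both forward ($t>s$, expanding interval) and backward ($t<s$, contracting interval) via the same forward/backward uniqueness argument. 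Characteristics from $(x_0,s)$ with $x_0\in(0,\delta)$, after at most one reflection off the fixed boundary, fill the strip $\{(x,t): 0\leq x \leq \delta + |s-t|\}$, which stays strictly inside $\Omega_t$ as long as $\delta+|s-t| < \ell t$, i.e.\ for $(s+\delta)/(1+\ell) < t < (s-\delta)/(1-\ell)$. By the choice of $s$ this window contains $[t_0,t_0+T_0]$, so the solution vanishes in a neighbourhood of the moving boundary throughout the observation window; hence $\phi_x(\ell t,t)\equiv 0$ on $(t_0,t_0+T_0)$ while $E(t_0)>0$, which contradicts (\ref{obslt}).

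The main obstacle is the converse, where one must verify that a single reflection off $x=0$ suffices to describe the support on the whole observation window, and that $s$ can be chosen so that both the forward and backward cones of the localized data stay clear of the moving boundary. Once this geometric bookkeeping is settled, the rest of the argument parallels the one already carried out for the fixed endpoint.
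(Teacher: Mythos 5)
Your proposal is correct and follows essentially the same route as the paper: the direct part is Lemma \ref{lmlt} with $M=1$ plus the bound $t\leq (1+\ell)t_0/(1-\ell)$ on the integration interval, and the converse is the same localized-data, finite-speed-of-propagation construction mirrored to the fixed endpoint (the paper simply fixes $s=(1+\ell)t_0-\delta$, which is the right endpoint of the admissible interval for $s$ that you identify).
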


\begin{proof}
Setting $M=1$ in (\ref{EBlt}), we infer%
\begin{equation*}
\frac{4t_{0}}{\left( 1+\ell \right) ^{2}\left( 1-\ell \right) }E\left(
t_{0}\right) \leq \frac{1+\ell }{1-\ell }t_{0}\int_{t_{0}}^{\frac{1+\ell }{%
1-\ell }t_{0}}\phi _{x}^{2}(\ell t,t)dt
\end{equation*}%
and (\ref{obslt}) holds when $T_{0}\geq 2\ell t_{0}/\left( 1-\ell \right) $.

To show that (\ref{obslt}) does not hold for $T_{0}<2\ell t_{0}/\left(
1-\ell \right) ,$ we argue as above. Consider again $T_{\delta }=\frac{\left(
1+\ell \right) t_{0}-2\delta }{  1-\ell } $  for
some $\delta >0$ sufficiently small. Solve Problem (\ref{waveu}) with data
at time $s=\left( 1+\ell \right) t_{0}-\delta $ with support in the
subinterval $(0,\delta ),$ (see Figure 1b). This solution is such that $%
u_{x}(\ell t,t)=0$ for $t_{0}+\delta <t<T_{\delta }-\delta $ since the
segment $x=\ell t,t\in (t_{0}+\delta ,T_{\delta }-\delta )$ remains outside
the domain of influence of the space segment $t=s,x\in (0,\delta ),$ hence%
\begin{equation*}
\int_{t_{0}+\delta }^{T_{\delta }-\delta }\phi _{x}^{2}(\ell t,t)dt=0,\text{
\ \ \ \ }\forall \delta >0.
\end{equation*}%
This ends the proof.
\end{proof}

\begin{remark}
Although the observability constant is different in (\ref{obs0}) and (\ref%
{obslt}), but the observability time is the same $T_{0}=2\ell t_{0}/\left(
1-\ell \right) .$ As in Remark \ref{rmktobs}, this value can be justified by
using characteristics,\ (see Figure 1b).
\end{remark}

Arguing as in the precedent section, we have the following controllability
result.

\begin{corollary}
Under the assumption \emph{(\ref{tlike}),} Problem \emph{(\ref{wavec}) }is
exactly controllable at the moving endpoint $\xi =\ell t$ for $T_{0}\geq
2\ell t_{0}/\left( 1-\ell \right) .$ Moreover, we can choose the control $v$
satisfying 
\begin{equation}
\int_{t_{0}}^{t_{0}+T_{0}}v\left( t\right) dt\leq K^{\prime }\left( \ell
,T_{0}\right) E\left( t_{0}\right) ,  \label{vlt}
\end{equation}%
where $K^{\prime }\left( T_{0},\ell \right) $ is a constant depending on $%
\ell $ and $T_{0}.$

Conversely, if $T<2\ell t_{0}/\left( 1-\ell \right) ,$ \emph{(\ref{wavec})}
is not controllable at $\xi =\ell t$.
\end{corollary}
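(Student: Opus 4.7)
The plan is to mirror the HUM argument that the authors sketch for the fixed endpoint in the preceding corollary, replacing the observability inequality (\ref{obs0}) with (\ref{obslt}) and the direct inequality (\ref{D0}) with (\ref{Dlt}). No new analytical ingredient is needed; only the boundary term arising from the moving endpoint has to be handled with some care.

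For the positive direction, assume $T_0 \geq 2\ell t_0/(1-\ell)$. Given data $(\phi^0,\phi^1)\in H^1_0(\Omega_{t_0})\times L^2(\Omega_{t_0})$, let $\phi$ be the corresponding solution of (\ref{wave}), and set $v(t):=\phi_x(\ell t,t)$. By (\ref{Dlt}), $v\in L^2(t_0,t_0+T_0)$ with norm controlled by $E(t_0)$. Plug this $v$ into (\ref{wavec}) (so that $y(\ell t,t)=v(t)$ and $y(0,t)=0$) and solve backward from zero final data at $t=T$ in the transposition sense of \cite{Mir:96,MML:13}. Define the HUM operator $\Lambda(\phi^0,\phi^1):=(y_t(t_0),-y(t_0))$. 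A standard duality identity on $Q_T$ then gives
\begin{equation*}
\bigl\langle \Lambda(\phi^0,\phi^1),(\phi^0,\phi^1)\bigr\rangle \;=\; \int_{t_0}^{t_0+T_0}\phi_x^2(\ell t,t)\,dt.
\end{equation*}
Coercivity of this quadratic form on $H^1_0(\Omega_{t_0})\times L^2(\Omega_{t_0})$ is exactly (\ref{obslt}), and continuity of $\Lambda$ into $H^{-1}(\Omega_{t_0})\times L^2(\Omega_{t_0})$ is exactly (\ref{Dlt}); Lax--Milgram makes $\Lambda$ an isomorphism. Consequently, any target $(y_T^0,y_T^1)\in L^2(\Omega_t)\times H^{-1}(\Omega_t)$ can be reached (after subtracting the free evolution of the initial data) by choosing $(\phi^0,\phi^1)$ so that $\Lambda(\phi^0,\phi^1)$ matches the required pair; the associated control $v=\phi_x(\ell t,t)$ then satisfies (\ref{vlt}) with the explicit constant $K'(\ell,T_0)=4M/\bigl((1-\ell)^2(1+\ell)\bigr)$ provided by (\ref{Dlt}), where $M$ is the least integer with $t_0+T_0\le\bigl((1+\ell)/(1-\ell)\bigr)^M t_0$.

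For the converse, I would argue by duality contradiction. If (\ref{wavec}) were controllable at $\xi=\ell t$ for some $T_0<2\ell t_0/(1-\ell)$, reading the same HUM pairing in the opposite direction would yield an observability estimate of the form $E(t_0)\le C\int_{t_0}^{t_0+T_0}\phi_x^2(\ell t,t)\,dt$, in direct contradiction with the converse part of the preceding theorem, which exhibits nontrivial solutions of (\ref{wave}) for which that integral vanishes.

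The step I expect to be the trickiest is the HUM integration by parts on the non-cylindrical domain $Q_T$. The outward unit normal on $\Sigma_T$ at $x=\ell t$ is $(\nu_t,\nu_x)=(-\ell,1)/\sqrt{1+\ell^2}$, so the boundary integrand at the moving end naturally involves both the traces of $\phi_x$ and $\phi_t$; the identity $\phi_t(\ell t,t)+\ell\,\phi_x(\ell t,t)=0$, already exploited in the theorem above, should collapse it to a clean expression in $\phi_x(\ell t,t)$ alone, matching the right-hand side of the pairing displayed above. Verifying this reduction, together with the transposition interpretation of the weak trace of $y$ at the moving boundary in (\ref{wavec}), is the only point not verbatim from the fixed-interval treatment in \cite{Komo:94}; once it is established, the rest of the argument is identical to the fixed-endpoint corollary.
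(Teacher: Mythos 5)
Your proposal is correct and follows essentially the same route as the paper, which itself only sketches this corollary by invoking HUM with the moving-endpoint observability inequality (\ref{obslt}) for coercivity, the direct inequality (\ref{Dlt}) for the bound (\ref{vlt}), the control $v=\phi_x(\ell t,t)$, and the non-observability construction for the converse. You supply more detail than the paper does (the Lax--Milgram setup, the transposition solution, and the boundary reduction via $\phi_t(\ell t,t)+\ell\phi_x(\ell t,t)=0$), but the underlying argument is the same.
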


\begin{remark}
The control obtained by HUM is $v=\phi _{x}\left( \ell t,t\right) $, with
some suitable choice of the initial conditions in (\ref{wave}), see also the proof of Theorem 1.2 in \cite{SLL:15}. Inequality (%
\ref{vlt}) is a consequence of (\ref{Dlt}).
\end{remark}


\end{document}